\theoremstyle{theorem}
\newtheorem{theorem}{Theorem}[section]
\newtheorem{proposition}[theorem]{Proposition}
\newtheorem{lemma}[theorem]{Lemma}
\theoremstyle{definition}
\newtheorem{definition}[theorem]{Definition}
\newtheorem*{remark*}{Remark}
\numberwithin{equation}{section}
\newcommand{\R}{\mathbb{R}}
\newcommand{\N}{\mathbb{N}}
\DeclareMathOperator{\Ric}{Ric}
\DeclareMathOperator{\diam}{diam}
\newcommand{\eps}{\varepsilon}
\DeclareMathOperator{\vol}{Vol} 
\title{Volume growth of 3-manifolds with scalar curvature lower bounds}
\author{Otis Chodosh}
\address{Department of Mathematics, Stanford University, Building 380, Stanford, CA 94305, USA}
\email{ochodosh@stanford.edu}
\author{Chao Li}
\address{Courant Institute, New York University, 251 Mercer St, New York, NY 10012, USA}
\email{chaoli@nyu.edu}
\author{Douglas Stryker}
\address{Department of Mathematics, Princeton University, Fine Hall, 304 Washington Road, Princeton, NJ 08540, USA}
\email{dstryker@princeton.edu}
\begin{document}

\maketitle

\begin{abstract}
We give a new proof of a recent result of Munteanu--Wang relating scalar curvature to volume growth on a $3$-manifold with non-negative Ricci curvature. Our proof relies on the theory of $\mu$-bubbles introduced by Gromov as well as the almost splitting theorem due to Cheeger--Colding. 
\end{abstract}

\section{Introduction}

In this note we give a new proof of (and slightly generalize) the following volume growth estimate recently proven by Munteanu--Wang \cite[Theorem 5.6]{MW:2}. 

\begin{theorem}\label{thm:main}
Let $(M^3, g)$ be a complete noncompact 3-manifold with $\Ric_g \geq 0$. Then
\begin{equation}\label{eq:scal_growth}
\liminf_{d(x_0, x) \to \infty} R_g(x) \leq C(x_0, M, g) < +\infty
\end{equation}
for all $x_0 \in M$. Moreover, if
\begin{equation}\label{eq:scal_bound}
R_g(x) \geq d(x, x_0)^{-\alpha}
\end{equation}
outside a compact set $K$ for some $x_0 \in M$ and $0 \leq \alpha < 2$, then
\begin{equation}\label{eq:vol_growth}
\vol(B_r(x_0)) \leq C(x_0,M, g)r^{1+\alpha}
\end{equation}
for all $r > 0$.
\end{theorem}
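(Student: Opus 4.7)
My plan is to establish the volume bound (\ref{eq:vol_growth}) via the annular estimate
\[
\int_{B_{2r}(x_0)\setminus B_r(x_0)} R_g \, d\vol \leq C\, r
\]
for all sufficiently large $r$. Combined with the hypothesis $R_g \geq (2r)^{-\alpha}$ on this annulus, this gives $\vol(B_{2r}\setminus B_r) \leq C r^{1+\alpha}$, and dyadic summation yields (\ref{eq:vol_growth}). The $\liminf$ bound (\ref{eq:scal_growth}) then follows by applying (\ref{eq:vol_growth}) with $\alpha = 0$ to the contradictory hypothesis $R_g\to +\infty$ at infinity, concluding at most linear volume growth, invoking the Cheeger--Gromoll splitting theorem to write $M = \R\times N^2$ with $N$ a closed surface, and observing that the scalar curvature on such a product is pulled back from $N$ and hence bounded.

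The key technical input combines Cheeger--Colding almost splitting with Gromov's $\mu$-bubble technique. Given the annulus $A_r := B_{2r}(x_0)\setminus B_r(x_0)$, I would first use Bishop--Gromov comparison together with $\Ric_g \geq 0$ to show that $A_r$, after rescaling by $r^{-1}$, is Gromov--Hausdorff close (in a uniform quantitative sense, via excess estimates applied to long geodesic segments that necessarily cross $A_r$) to an open region in a metric product $(-L, L)\times X$, where $L\sim r$ and $X$ is a compact length space of Hausdorff dimension at most $2$. Using smooth convergence on the non-collapsed part of the Cheeger--Colding limit, this approximation can be promoted to a smooth embedded cylindrical domain $\Omega\subset A_r$ capturing a definite fraction of $\vol(A_r)$ and diffeomorphic to $(-L', L')\times \Sigma_0$ for a closed orientable surface $\Sigma_0$ of genus bounded independently of $r$.

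Next, on $\Omega$, I would apply the $\mu$-bubble construction. For a smooth function $u_s$ with $u_s\to\pm\infty$ at the two ends of $\Omega$ and parameter $s$ that effectively shifts the minimizer along the cylinder axis, let $\Sigma_s\subset\Omega$ be a smooth stable $\mu$-bubble minimizing $\mathcal H^2(\partial^*E\cap\Omega)-\int_E u_s\, d\vol$ among Caccioppoli sets separating the two components of $\partial\Omega$; existence and regularity follow from Gromov's theory. The second variation of this functional, combined with the Gauss equation and Gauss--Bonnet on $\Sigma_s$, yields
\[
\int_{\Sigma_s} R_g \, d\mathcal H^2 \leq 4\pi\chi(\Sigma_s) + \mathrm{error}(u_s) \leq C,
\]
uniformly in $s$. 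Sweeping $s$ across an interval of length $\sim L'$ and applying a coarea/monotonicity argument then gives $\int_\Omega R_g\, d\vol \leq C\cdot L' \leq C' r$, establishing the desired annular bound.

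The principal obstacle I anticipate is the second step: passing from the a priori only Gromov--Hausdorff almost splitting provided by Cheeger--Colding to a smooth cylindrical chart $\Omega$ on which the $\mu$-bubble minimization is well-posed and the cross-sectional topology is controlled independently of $r$. Bounding the genus of $\Sigma_0$ uniformly in $r$ is delicate: it requires combining non-collapsed smooth convergence with the observation that $\Ric\geq 0$ prevents proliferating handles in the cross section. A further subtlety is the sweep-out integration, since $\mu$-bubble minimizers $\Sigma_s$ do not literally foliate $\Omega$; the passage from the per-slice bound to the volume integral must be justified via a monotonicity argument for a one-parameter family of weights $u_s$ rather than by a genuine co-area formula.
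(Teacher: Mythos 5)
Your approach shares the paper's key ingredients ($\mu$-bubbles and Cheeger--Colding almost-splitting) but combines them differently, and it contains one definite error plus serious gaps at the central technical steps. The error is in the deduction of \eqref{eq:scal_growth}: linear volume growth for a complete manifold with $\Ric_g\geq 0$ does \emph{not} produce a line, so Cheeger--Gromoll splitting cannot be invoked --- a capped half-cylinder $[0,\infty)\times S^2$ has nonnegative Ricci curvature, linear volume growth, and no line. The paper's argument instead runs an annular decomposition at scales shrinking like $f(r)^{-1/2}$ (where $f$ is essentially the radial infimum of $R_g$), showing that $R_g\to\infty$ at infinity forces \emph{sublinear} volume growth $\vol(B_r)/r\to 0$, which contradicts Yau's theorem that every complete noncompact manifold with $\Ric_g\geq 0$ satisfies $\vol(B_r)\geq c\,r$. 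Your own annular estimate, if it were established, would yield sublinearity by the same mechanism; the mistake is stopping at linearity and reaching for the splitting theorem.

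The route to the annular estimate $\int_{B_{2r}\setminus B_r}R_g \leq Cr$ has the gaps you flag, but they are more than ``delicate,'' and the paper's Lemma~\ref{lem:main} is designed precisely to avoid them. Cheeger--Colding almost-splitting gives only Gromov--Hausdorff closeness to a length space $Y\times\R$; $\Ric_g\geq 0$ plus noncollapsing does not promote this to a \emph{smooth} cylindrical diffeomorphism with topologically controlled cross-section (the Cheeger--Colding--Naber $\eps$-regularity and codimension-$4$ theory require two-sided Ricci bounds, unavailable here). The $\mu$-bubbles $\Sigma_s$ do not foliate $\Omega$, so passing from a per-slice bound $\int_{\Sigma_s}R_g\leq C$ to $\int_\Omega R_g\leq CL'$ is itself essentially the content of the open conjectures of Yau and Gromov quoted in the introduction. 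And the annuli $B_{2r}\setminus B_r$ can be disconnected, with bounded components far from the ray that a cylindrical chart around the end does not see. The paper sidesteps all of this: the $\mu$-bubble lemma produces a \emph{single} connected, small-diameter \emph{separating} surface $\Sigma_1$; the GH-split $Y\times\R$, used only at the metric-space level, must then have $\diam Y$ small because the image of $\Sigma_1$ separates a ball of definite size into two pieces of definite diameter; this forces a universal \emph{diameter} bound on the unbounded annular piece $\overline{E}_1\setminus E_2$; and Bishop--Gromov converts the diameter bound into a volume bound. No smooth limit structure, genus control, or sweep-out is required.
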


(We note that Theorem \ref{thm:main} here considers the optimal range\footnote{When $\alpha \geq 2$, any estimate from the scalar curvature inequality \eqref{eq:scal_bound} would be weaker than the Bishop--Gromov cubic volume growth estimate from non-negative Ricci; moreover, by \eqref{eq:scal_growth} it is not possible that \eqref{eq:scal_bound} holds with $\alpha < 0$.} of $\alpha$ while \cite[Theorem 5.6]{MW:2} only proves  Theorem \ref{thm:main} for $0\leq \alpha \leq 1$.\footnote{On the other hand, we point out that \cite[Theorem 5.6]{MW:2} does consider a certain notion of negativity of the Ricci curvature at infinity, so in that respect \cite[Theorem 5.6]{MW:2} is more general than Theorem \ref{thm:main}. Moreover, \cite[Theorem 5.6]{MW:2} yields a more explicit estimate for the constant $C(x_0,M,g)$ than we do here.}) The proof of Theorem \ref{thm:main} given by Munteanu--Wang is based on their analysis of certain harmonic functions on such manifolds (see also their earlier work \cite{MW:1} as well as \cite{CLstable}). Our proof is rather different and instead relies on the theory of $\mu$-bubbles introduced by Gromov \cite{Gromov:metric-inequalities}.

The techniques used here are inspired by our recent article on (non-compact) stable minimal hypersurfaces in $4$-manifolds \cite{CLS:stable} (see also \cite{CL:aniso}). We note that in this paper it is necessary to handle the possibility that $\partial B_r(x_0) \subset M$ may have many connected components (even when $(M,g)$ has only one end). (In \cite{CLS:stable} this issue was avoided since all that was needed was an efficient cutoff function.)  Here we use the almost splitting theorem of Cheeger--Colding to show that even if there are many components of $B_r(x_0)$ they do not contribute too much to the volume growth. 

We note that Theorem \ref{thm:main} is related to well-known conjectures of Yau \cite{Yau:problems} and Gromov \cite{Gromov:large}. Yau has conjectured that if $(M^n,g)$ has $\Ric_g\geq 0$, then $\int_{B_r(x_0)} R_g \leq C r^{n-2}$ for all $r>0$ while Gromov has conjectured that if $(M,g)$ satisfies $\Ric_g\geq 0$ and $R_g(x) \geq 1$ then $\vol(B_r(x_0)) \leq C r^{n-2}$. For some works related to these conjectures we refer to \cite{Petrunin:upperbound,Naber:conj,Xu:Yauconj,Zhu:psc.int}.

\subsection{Acknowledgements}
We are grateful to Jeff Cheeger and Aaron Naber for some helpful discussions. O.C. was supported by a Terman Fellowship and a Sloan Fellowship. C.L. was supported by an NSF grant (DMS-2202343). D.S. was supported by an NDSEG Fellowship.

\section{Proof of main result}

By using the splitting theorem \cite{CheegerGromoll}, it is easy to see that Theorem \ref{thm:main} holds for complete 3-manifolds with $\Ric_g \geq 0$ and two (or \emph{a priori} more) ends, so it suffices to handle the case when $M$ has one end.

A key tool is the following result which is a consequence of the theory of $\mu$-bubbles due to Gromov \cite{Gromov:metric-inequalities}. See, for example, \cite[Lemma 5.3]{CLS:stable} (with references to \cite{CL:aspherical}) for a proof.
\begin{lemma}[$\mu$-bubble diameter bound]\label{lem:mububble}
Let $(N^3, g)$ be a 3-manifold with boundary satisfying $R_g \geq 1$. Then there are universal constants $L > 0$ and $c > 0$ such that if there is a $p \in N$ with $d_N(p, \partial N) > L/2$, then there is an open set $\Omega \subset B_{L/2}(\partial N) \cap N$ and a smooth surface $\Sigma^2$ such that $\partial \Omega = \Sigma \sqcup \partial N$ and each component of $\Sigma$ has diameter at most $c$.
\end{lemma}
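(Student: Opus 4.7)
My approach is the standard $\mu$-bubble recipe in a tubular neighborhood of $\partial N$. Fix a universal constant $L > 0$ to be chosen below, let $\rho$ be (a smooth approximation of) the distance function to $\partial N$, and work in the slab $U := \{0 < \rho < L/2\}$; the hypothesis $d_N(p, \partial N) > L/2$ guarantees $U \subset N$. Define a warping function $h := u \circ \rho$ on $U$, where $u \in C^\infty(0, L/2)$ blows up to $+\infty$ as $\rho \to 0^+$ and to $-\infty$ as $\rho \to (L/2)^-$, and, crucially, satisfies the ODE inequality
\begin{equation*}
u'(s) + \tfrac{3}{4} u(s)^2 + \tfrac{1}{2} \,\geq\, \delta \qquad \text{for all } s \in (0, L/2)
\end{equation*}
for some universal $\delta > 0$. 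A rescaled tangent function of the form $u(s) = -c \tan(\lambda(s - L/4))$ with $\lambda L = 2\pi$ achieves this once $L$ is fixed at a definite universal value.

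Next, I would minimize the $\mu$-bubble functional
\begin{equation*}
\mathcal{A}(\Omega') \,=\, \mathcal{H}^2\bigl(\partial^* \Omega' \cap U\bigr) \,-\, \int_{\Omega'} h \, d\mathrm{vol}_g
\end{equation*}
among Caccioppoli sets $\Omega'$ with $\{\rho < \eta\} \subseteq \Omega' \subseteq U$ for some small fixed $\eta > 0$. The barriers produced by $u \to \pm\infty$ push the boundary hypersurface $\Sigma := \partial \Omega \setminus \partial N$ of any minimizer strictly into the interior of $U$, so in particular $\Omega \subset B_{L/2}(\partial N) \cap N$ and $\partial \Omega = \Sigma \sqcup \partial N$. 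Existence and smoothness of $\Omega$ follow from the now-standard $\mu$-bubble theory (see, e.g., \cite{CL:aspherical}).

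The main computation is to feed the Euler--Lagrange identity $H_\Sigma = h|_\Sigma$, the second variation of $\mathcal{A}$, the Gauss equation in dimension $3$, the two-dimensional pinching $|A|^2 \geq \tfrac{1}{2} H_\Sigma^2$, and the scalar bound $R_g \geq 1$ into the stability inequality. This yields
\begin{equation*}
\int_\Sigma \bigl(|\nabla \phi|^2 + K_\Sigma \phi^2\bigr) \, dA \,\geq\, \int_\Sigma \bigl(\tfrac{1}{2} R_g + \tfrac{3}{4} h^2 + \partial_\nu h\bigr) \phi^2 \, dA \,\geq\, \delta \int_\Sigma \phi^2 \, dA
\end{equation*}
for every $\phi \in C^\infty(\Sigma)$, where the second inequality uses the ODE and $|\nabla \rho| \leq 1$. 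Applied componentwise to $\phi \equiv 1$, Gauss--Bonnet then forces every component $\Sigma_0$ of $\Sigma$ to be a topological $2$-sphere of area at most $8\pi/\delta$.

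To convert the spectral gap into the desired diameter bound, I would appeal to the standard technique: conformally rescale $\Sigma_0$ by a normalized positive first eigenfunction of $-\Delta_{\Sigma_0} + K_{\Sigma_0}$, producing a metric whose Gauss curvature is bounded below by $\delta$; Bonnet--Myers then yields a diameter bound in the rescaled metric, and an elliptic Harnack estimate for the eigenfunction transfers this to a universal diameter bound $c$ in $g_{\Sigma_0}$. I expect the main technical obstacle to be isolating the warping profile $u$ that yields a universal positive $\delta$ while the length $L$ remains universal; the remaining pieces ($\mu$-bubble existence/regularity and the diameter-from-spectral-gap argument) are by now routine in this literature.
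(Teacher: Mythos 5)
The paper does not prove Lemma~\ref{lem:mububble}; it cites \cite[Lemma 5.3]{CLS:stable} (which in turn points to \cite{CL:aspherical}), so the relevant comparison is to the standard $\mu$-bubble argument, and your sketch is indeed that argument: warped prescribed-mean-curvature functional in the collar $\{0<\rho<L/2\}$, barriers from $u\to\pm\infty$, first variation $H_\Sigma=h$, stability plus the Gauss equation plus $|A|^2\geq H^2/2$ plus $R_g\geq 1$ to get the spectral inequality, and Gauss--Bonnet with $\phi\equiv 1$ to conclude each component of $\Sigma$ is a sphere with universal area bound. Your choice of profile $u(s)=-c\tan(\lambda(s-L/4))$ with $\lambda L=2\pi$ does yield $u'+\tfrac34 u^2+\tfrac12\geq\delta$ for a universal $\delta$ once $L$ is large enough, so the step you flag as the potential obstacle is in fact fine.

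The step that is \emph{not} quite right is the diameter bound. You propose: conformally rescale by the first eigenfunction $\phi_1$ of $-\Delta_\Sigma+K_\Sigma$, get $\tilde K\geq\delta$ (true after normalizing $\sup\phi_1=1$, since $\tilde K=\phi_1^{-2}(\lambda_1+|\nabla\log\phi_1|^2)$), apply Bonnet--Myers in $\tilde g$, and then ``transfer'' to $g$ via an elliptic Harnack estimate for $\phi_1$. That last transfer does not go through as stated: a Harnack inequality for $-\Delta\phi_1+(K-\lambda_1)\phi_1=0$ requires quantitative control on the potential $K-\lambda_1$ (e.g.\ an $L^p$ bound for $p>1$, or bounded geometry of $\Sigma$), which is precisely what has not yet been established -- so the argument is circular. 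The actual route in \cite{CL:aspherical} uses the same conformal metric but proceeds differently: from $\lambda_1\phi_1^{-2}=\tilde K-|\tilde\nabla\log\phi_1|^2\leq\tilde K$ one gets $\phi_1^{-1}\leq(\tilde K/\lambda_1)^{1/2}$ pointwise; then along a $\tilde g$-minimizing geodesic $\tilde\gamma$ one estimates $L_g(\tilde\gamma)=\int_{\tilde\gamma}\phi_1^{-1}\,d\tilde s\leq(\tilde\ell/\lambda_1)^{1/2}(\int_{\tilde\gamma}\tilde K\,d\tilde s)^{1/2}$ by Cauchy--Schwarz, and bounds $\int_{\tilde\gamma}\tilde K\,d\tilde s$ by the second-variation (index-form) inequality for minimizing geodesics together with the Bonnet--Myers length bound $\tilde\ell\leq\pi/\sqrt\delta$. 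No Harnack inequality is needed, and Harnack alone would not suffice. This is a repairable slip in one step rather than a flaw in the overall strategy, but as written the proof of the diameter estimate has a gap.
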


Let $L$ and $c$ denote the constants from Lemma \ref{lem:mububble}. Note that we are free to make $L$ larger, so we will assume $L \gg c$. The following is the main geometric result used in the proof of Theorem \ref{thm:main}. 

\begin{lemma}\label{lem:main}
Let $(M^3, g)$ be a complete 3-manifold with $\Ric_g \geq 0$ and one end. Let $x \in M$. There is an $r_0(x, M, g) > 0$ and a universal constant $C > 0$ so that if
\[ R_g\mid_{\overline{B}_{r+a_1+a_2}(x) \setminus B_{r}(x)} \geq 1 \]
for some $r \geq r_0$ and some $a_1, a_2 \in [L, 2L]$, then
\[ \vol(\overline{E}_1 \setminus E_2) \leq C, \]
where $E_1$ and $E_2$ are the unique unbounded components of $M \setminus \overline{B}_{r + a_1}(x)$ and $M \setminus \overline{B}_{r + a_1+a_2}(x)$ respectively.
\end{lemma}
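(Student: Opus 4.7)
The plan is to apply the $\mu$-bubble lemma twice to cap off both ends of the annular region $A := \overline{E}_1 \setminus E_2$, decompose $A$ accordingly into pieces, and bound the total volume using the Cheeger--Colding almost splitting theorem together with the one-end hypothesis.

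First, apply Lemma~\ref{lem:mububble} to $N_1 := \overline{E}_1$, whose boundary $\partial E_1$ lies on $\partial B_{r+a_1}(x)$. The assumption $a_1 \geq L$ ensures the collar $B_{L/2}(\partial E_1) \cap N_1$ (where the $\mu$-bubble lives) is contained in the annulus where $R_g \geq 1$, and $\overline{E}_1$ being unbounded gives the required basepoint condition. This produces a smooth surface $\Sigma_1$ with components of diameter $\leq c$ together with an open set $\Omega_1 \subset B_{L/2}(\partial E_1) \cap N_1$ satisfying $\partial \Omega_1 = \Sigma_1 \sqcup \partial E_1$; since $a_2 \geq L$, one checks directly that $\Omega_1 \subset A$. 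Analogously, apply Lemma~\ref{lem:mububble} to an appropriate region with boundary $\partial E_2$ (for instance, $N_2 := M \setminus E_2$ after handling any bounded components of $M \setminus \overline{B}_{r+a_1+a_2}(x)$ near $\partial E_2$, with $r_0$ large enough to guarantee the basepoint condition) to obtain $\Sigma_2$ and $\Omega_2 \subset A$ near $\partial E_2$. Then write $A = \overline{\Omega}_1 \cup \overline{\Omega}_2 \cup A_{\mathrm{mid}}$, where $A_{\mathrm{mid}} := A \setminus (\Omega_1 \cup \Omega_2)$ has $\partial A_{\mathrm{mid}} \subset \Sigma_1 \cup \Sigma_2$.

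The next step is to bound the volume of each connected component of the three pieces by a universal constant. A component $T$ of $A_{\mathrm{mid}}$ is bounded by small-diameter pieces of $\Sigma_1 \cup \Sigma_2$ and is confined to an annulus of width at most $a_2 \leq 2L$; the diameter bound on its boundary components, combined with $\Ric_g \geq 0$ and Bishop--Gromov volume comparison, should bound $\vol(T)$ by a universal constant. A parallel analysis handles each component of $\overline{\Omega}_1$ and $\overline{\Omega}_2$, which is capped by a diameter-$\leq c$ piece of $\Sigma_j$ on one side and sits in the $L/2$-collar of $\partial E_j$ on the other, allowing one to control its volume via the variational characterization of the $\mu$-bubble minimizer.

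The main obstacle is that $\partial E_1$ and $\partial E_2$ may a priori have many connected components, corresponding to many thin necks crossing the annulus, so the decomposition may involve many pieces. This is where the one-end hypothesis combines with the Cheeger--Colding almost splitting theorem. Heuristically, if the number of contributing components were unbounded along a sequence $r_k \to \infty$, one could pass to a blow-down at scale $r_k$ and, via almost splitting applied to minimizing segments from $x$ toward distinct necks, extract an isometric $\R$-factor in a pointed Gromov--Hausdorff limit; the Cheeger--Gromoll splitting theorem would then yield a line in $M$ itself, forcing $M$ to split as a Riemannian product $\R \times Y$ with at least two ends and contradicting the one-end hypothesis. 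Making this heuristic argument precise and uniform in $r \geq r_0(x, M, g)$---so that the bound on the number of components, and hence on $\vol(A)$, is a universal constant---is the most delicate step of the proof.
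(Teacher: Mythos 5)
Your proposal diverges substantially from the paper's proof and, as written, has genuine gaps.

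\textbf{What the paper actually does.} The paper first invokes \cite[Proposition~3.2]{CLS:stable} to conclude that $\partial E_k$ is \emph{connected} once $r \geq r_0(x,M,g)$, so the ``many thin necks'' scenario you worry about does not arise; only one $\mu$-bubble surface $\Sigma_k$ (connected, $\diam \leq c$) is produced per boundary. The almost splitting theorem is then applied \emph{once, at a fixed universal scale} $D+R$ around $\gamma(t_1)$. The key step (Claim~1) uses the connected, separating, small-diameter surface $\Sigma_1$ to force $\diam(Y) \leq b$ in the approximating product $Y\times\R$: since $\Sigma_1$ disconnects a ball into two pieces each of diameter $\gtrsim L/2$ (because the ray $\gamma$ runs through both), the $Y$-factor cannot be large. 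Claims~2 and~3 then show the whole ball, and in particular $\overline E_1\setminus E_2$, lies within universal distance $A$ of a fixed-length segment of $\gamma$, i.e.\ $\overline E_1\setminus E_2$ has \emph{diameter} bounded by a universal constant, and Bishop--Gromov is applied only once, to the full region.

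\textbf{Gaps in your argument.} First, bounding $\vol(A_{\mathrm{mid}})$ and $\vol(\Omega_j)$ ``by small boundary diameter plus annulus width plus Bishop--Gromov'' does not work: a region with small-diameter boundary and bounded radial extent can still have arbitrarily large intrinsic diameter and volume (think of a thin flat annulus $\{r+L \le |x|\le r+2L\}$ in $\R^3$, which has volume $\sim r^2$). Bishop--Gromov requires a diameter bound on the \emph{region}, not on its boundary, and nothing in your decomposition produces that bound for $A_{\mathrm{mid}}$ or for the $\mu$-bubble regions $\Omega_j$. Second, your blowdown/Cheeger--Gromoll heuristic is mis-scaled and conflates almost splitting with exact splitting: a blowdown at scale $r_k\to\infty$ collapses the fixed-width ($\sim L$) annulus to a point, so the necks you wish to separate disappear in the limit; and even if a pointed GH limit were to split, the Cheeger--Gromoll theorem requires an actual line in $M$, which is not produced by an almost split at a single scale. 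In the paper, the one-end hypothesis enters through \cite[Proposition~3.2]{CLS:stable} (connectedness of $\partial E_k$), and the almost splitting theorem is used at the \emph{annulus} scale to show the cross-section $Y$ is small --- not to extract a line in a blowdown. You would need to add both the connectedness input and the diameter-bound-from-thin-cylinder mechanism (Claims~1--3) to make the argument go through.
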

\begin{proof}
Let $E_0$ be the unique unbounded component of $M \setminus \overline{B}_{r}(x)$, and $E_1$, $E_2$ as in the statement. By \cite[Corollary 1.5]{Anderson} (or \cite{SY:ric,Liu:milnor}), we have $b_1(M) < \infty$. Then by \cite[Proposition 3.2]{CLS:stable}, $\partial E_k$ are connected so long as $r\geq r_0$ for $r_0(x, M, g) > 0$ fixed. 

Let $\gamma: \R_+ \to M$ be a geodesic ray associated with the unique end of $M$. For all $r>r_0$, $\gamma\cap \partial B_r(x)$ lies on the boundary of the unique unbounded component of $M\setminus B_r(x)$. We apply Lemma \ref{lem:mububble} to $E_k$ for $k = 0$, 1. As in the proof of \cite[Lemma 5.4]{CLS:stable}, we obtain a connected surface $\Sigma_k$ in $B_{L/2}(\partial E_k) \cap E_k$ with $\diam(\Sigma_k) \leq c$ that separates $\partial E_k$ from the end of $M$. Choose $t_k \in \R_+$ with $\gamma(t_k) \in\Sigma_k$, for $k=0,1$. Note that $d_g(\gamma(t_0),\gamma(t_1)) \leq a_1 + a_2 \leq 4L$ so $d_g(x_0, x_1) \leq 4L + 2c =: D$ for any $x_0 \in \Sigma_0$ and $x_1 \in \Sigma_1$.

We now define some constants. Let  $b=3c, A=2\sqrt{b^2+D^2}$. Fix $R\in \R$ such that
\begin{equation}\label{eq.assumption.R}
R\ge A+4L,\quad \sqrt{b^2+(2R+D)^2}+1 <2D +2R.
\end{equation}
Then take $\delta \in (0,1)$ such that
\begin{equation}\label{eq.assumption.delta}
\delta<\sqrt{b^2+D^2},\quad 14\delta + 6\sqrt{\delta (D+R)}<c,\quad 2c+22 \delta +6\sqrt{\delta (D+R)}<\frac L2.
\end{equation}
Note that all constants here are numerical (i.e., independent of $(M,g)$ and $r_0$). 

By the Cheeger--Colding almost splitting theorem \cite[Theorem 6.62]{CC:splitting}, assuming $r_0$ is sufficiently large depending on $D,R,\delta$, there is a length space $(Y, d)$ with
\[ d_{GH}(B_{D+R}(\gamma(t_1)) \subset (M, d_g),\  B_{D+R}(y, 0) \subset (Y \times \R, d \times d_{\text{Euc}})) < \delta. \]
(See Definition \ref{defi:GH} for the definition of the Gromov--Hausdorff distance.) Below we fix $(Y,d)$ with this property. \\

\noindent \emph{Claim 1}. $\diam(Y, d) \leq b$.
\begin{proof}
By the definition of $D$, we have $\Sigma_0 \cup \Sigma_1 \subset B_{D+R}(\gamma(t_1))$. Since $\Sigma_1$ is connected and separating, and $B_{D+R}(\gamma(t_1)) \supset \Sigma_1$ is connected, $B_{D+R}(\gamma(t_1)) \setminus \Sigma_1$ has two components. Let $\Omega_i$, $i=1,2$, denote the two components. Let
\[ f: B_{D+R}(\gamma(t_1)) \to B_{D+R}(y, 0) \]
be a $\delta$-Gromov--Hausdorff approximation (cf.\ Definition \ref{defi:eps-GH-approx}) given by the almost splitting theorem. Then,
\[ B_{\delta}(f(\Omega_1)) \cup B_{\delta}(f(\Sigma_1)) \cup B_{\delta}(f(\Omega_2)) \]
covers $B_{D+R}(y, 0)$. Let
\[ S := B_{\delta}(f(B_{2\delta}(\Sigma_1))) \subset B_{D+R}(y, 0), \]
and let
\[ \Lambda_i := B_{D+R}(y, 0) \cap B_{\delta}(f(\Omega_i)) \setminus S. \]
Then
\[ B_{D + R}(y, 0) = \Lambda_1 \cup S \cup \Lambda_2. \]

We first show an upper bound for the (extrinsic) diameter of $S$. Suppose $p,\ q \in S$. Then there are $p',\ q' \in B_{2\delta}(\Sigma_1)$ and $p'',\ q'' \in \Sigma_1$ so that
\[ d(p, q) \leq d(f(p'), f(q')) + 2\delta \leq d(p', q') + 3\delta \leq d(p'', q'') + 7\delta \leq c + 7\delta. \]
Hence,
\[ \diam(S) \leq c + 7\delta. \]

Second, we show a lower bound for the diameter of $\Lambda_i$. Note that the length of a component of $\gamma$ in $\Omega_i$ is at least $L/2$. Then we can take $p,\ q \in \Omega_i \setminus B_{5\delta}(\Sigma_1)$ with $d(p, q) \geq L/2 - 5\delta$. For any $x \in B_{2\delta}(\Sigma_1)$, we have
\[ d(f(p), f(x)) \geq d(p, x) - \delta \geq d(p, \Sigma_1) - 3\delta \geq 2\delta \]
(and similarly for $q$). Thus there are $p', q' \in \Lambda_i$ satisfying
\[ d(p', q') \geq d(f(p), f(q)) - 2\delta \geq d(p, q) - 3\delta \geq L/2 - 8\delta. \]
Hence,
\[ \diam(\Lambda_i) \geq L/2 - 8\delta. \]

Third, we show that $\Lambda_1$ and $\Lambda_2$ are separated by a positive distance. Let $p \in \Lambda_1$ and $q \in \Lambda_2$. Then there are $p' \in \Omega_1 \setminus B_{2\delta}(\Sigma_1)$ and $q' \in \Omega_2 \setminus B_{2\delta}(\Sigma_1)$ satisfying (by the fact that $\Sigma_1$ is separating $\Omega_1$ and $\Omega_2$)
\[ d(p, q) \geq d(f(p'), f(q')) - 2\delta \geq d(p', q') - 3\delta \geq \delta. \]
Hence,
\[ d(\Lambda_1, \Lambda_2) \geq \delta. \]

Finally, we show that 
\[ \diam(Y,d) \leq 2c + 14\delta + 6\sqrt{\delta (D+R)}. \]

Suppose otherwise for contradiction. Then $c + 7\delta + 3\sqrt{\delta (D+R)} < \frac{1}{2}\diam(Y, d)$. By Proposition \ref{prop:centers} (using $9\delta \leq D+R$) and the diameter bound for $S$, we have
\[ S \subset B_{c+7\delta + 3\sqrt{\delta (D+R)}}(y, 0). \]
By Proposition \ref{prop:prod_met},
\[ B_{D+R}(y, 0) \setminus B_{c+7\delta + 3\sqrt{\delta (D+R)}}(y, 0) \]
is path connected and does not contain $S$. Hence, (without loss of generality)
\[ \Lambda_2 \subset B_{c+7\delta + 3\sqrt{\delta (D+R)}}(y, 0). \]
But then
\[ \diam(\Lambda_2) \leq \diam(B_{c+7\delta + 3\sqrt{\delta (D+R)}}(y, 0)) \leq 2c+14\delta + 6\sqrt{\delta (D+R)}. \]
Thus \eqref{eq.assumption.delta} implies that
\[ \diam(\Lambda_2) < L/2 - 8\delta, \]
which contradicts the diameter lower bound. 
\end{proof}

\noindent \emph{Claim 2}. $B_{D+R}(\gamma(t_1)) \subset  B_{2\sqrt{b^2 + D^2}}(\gamma\mid_{|t-t_1| \leq D+R})$.
\begin{proof}
Let $\sigma$ denote the segment of $\gamma$ in $B_{D+R}(\gamma(t_1))$. Note that $\diam(\sigma) = 2D+2R$.

We first show that in $Y\times \R$, $B_{D+R}(y, 0) \subset B_{\sqrt{b^2 + D^2}}(f(\sigma))$. Suppose for contradiction that there is a point $p \in B_{D+R}(y, 0)$ satisfying
\[ d(p, f(\sigma)) > \sqrt{b^2 + D^2}. \]
Let $\pi$ denote the projection to $\R$ in $Y\times \R$. Then
\[ d_{\R}(\pi(p), \pi(f(\sigma))) > D. \]
Since $D > \delta$, $f$ is a $\delta$-approximation, and $\sigma$ is connected, we have
\[ \diam_{\R}(\pi(f(\sigma))) \leq 2R + D. \]
Then
\[ \diam(f(\sigma)) \leq \sqrt{b^2 + (2R+D)^2}. \]
By \eqref{eq.assumption.R} and $\delta<1$, we have
\[ \diam(\sigma) \leq \sqrt{b^2 + (2R+D)^2} + \delta < 2D + 2R, \]
which yields a contradiction.

Take $z \in B_{D+R}(\gamma(t_1))$. By the above, there is an $z' \in \sigma$ with $d(f(z), f(z')) \leq \sqrt{b^2 + D^2}$. Then
\[ d(z, \sigma) \leq d(z, z') \leq d(f(z), f(z')) + \delta \leq \sqrt{b^2 + D^2} + \delta. \]
The claim follows.
\end{proof}

\noindent \emph{Claim 3}. $\overline{E}_1 \setminus E_2 \subset B_{A}(\gamma\mid_{|t-t_1| < D+R})$.
\begin{proof}
Let $x' \in \overline{E}_1 \setminus E_2$.

If $d(x', x) \leq R + c + r$, then
\begin{align*}
d(x', \gamma(t_1))
& \leq d(x', \Sigma_0) + \diam(\Sigma_0) + d(\gamma(t_0), \gamma(t_1))\\
& \leq (R + c) + c + 4L\\
& = R + D.
\end{align*}
Then by Claim 2, we have $d(x', \gamma\mid_{|t-t_1| < D+R}) \leq A$.

Now, suppose for contradiction that $d(x', x) > R + c + r$. Take the radial geodesic $\mu$ from $x$ to $x'$, and let $x''$ be the point on $\mu$ with
\[ d(x'', x) = R + c + r. \]
By the above observation (since $R > L$, we still have $x'' \in \overline{E}_1 \setminus E_2$), we have
\[ d(x'', \gamma\mid_{|t-t_1| < D+R}) \leq A. \]
However, since $\partial B_{r + a_1 + a_2}(x)$ separates $x''$ from $\gamma$ (by the definition of $E_2$), we have
\[ d(x'', \gamma\mid_{|t-t_1| < D+R}) \geq R + c - a_1 - a_2 \geq R + c - 4L. \]
Since $R \geq A + 4L$, we reach a contradiction.
\end{proof}

By Claim 3, the diameter of $\overline{E}_1 \setminus E_2$ is bounded from above by $2A + 2D + 2R$ (which is bounded by a universal constant). Since $\Ric_g\geq0$, the Bishop--Gromov inequality yields a universal constant $C$ so that
\[ \vol(\overline{E}_1 \setminus E_2) \leq C, \]
as desired.
\end{proof}

We can now prove the main result. 
\begin{proof}[Proof of Theorem \ref{thm:main}]
We assume $M$ has one end. Let $r_0(x_0, M, g)$ and $C_1$ be the constants in Lemma \ref{lem:main}, where we assume $K \subset B_{r_0}(x_0)$.

First, assume (\ref{eq:scal_bound}). Take $r > 0$ very large (so that $r^{1-\alpha/2} > r_0$). Set $\tilde{g} := r^{-\alpha}g$. Then
\[ R_{\tilde{g}} \geq 1\ \ \text{on}\ \ B_{r^{1-\alpha/2}}^{\tilde{g}}(x_0). \]
Let $k \in \N$ so that $r_0 \leq r^{1-\alpha/2} - kL < r_0 + L$. Set
\[ r_i := r^{1-\alpha/2} - kL + iL. \]
Let $E_i$ be the unique unbounded component of $M \setminus \overline{B}_{r_i}^{\tilde{g}}(x_0)$. By Lemma \ref{lem:main} (with $a_1 = a_2 = L$), we have
\[ \vol_{\tilde{g}}(\overline{E}_i \setminus E_{i+1}) \leq C_1 \]
for $1 \leq i \leq k-1$, so
\[ \vol_g(\overline{E}_i \setminus E_{i+1}) \leq C_1r^{3\alpha/2}. \]
Note that
\[ B_r^g(x_0) = B_{r^{1-\alpha/2}}^{\tilde{g}}(x_0) \subset (M \setminus E_1) \cup \bigcup_{i=1}^{k-1} (\overline{E}_i \setminus E_{i+1}). \]
Moreover, by the choice of $k$, there is a constant $V(M, g) > 0$ so that
\[ \vol_g(M \setminus E_1) \leq V. \]
Hence, we have (since $k \leq r^{1-\alpha/2}/L$)
\[ \vol_g(B_r(x_0)) \leq V + C_1kr^{3\alpha/2} \leq V + \frac{C_1}{L}r^{1 + \alpha}. \]
Then we have
\[ \lim_{r \to \infty} \frac{1}{r^{1+\alpha}} \vol_g(B_r(x_0)) < +\infty \]
and (since $\alpha < 2$)
\[ \lim_{r \to 0} \frac{1}{r^{1+\alpha}} \vol_g(B_r(x_0)) = 0, \]
so the conclusion holds.

Now, assume for contradiction that (\ref{eq:scal_growth}) fails at some $x_0 \in M$. Let
\[ f_1(r) := \inf_{M \setminus B_r(x_0)} R_g. \]
By construction, $f_1$ is nonnegative and increasing. By the contradiction assumption, we have
\[ \lim_{r \to \infty} f_1(r) = + \infty. \]
Assuming $r_0$ sufficiently large, we have $f(r_0) \geq 1$. On $[r_0, \infty)$, we define a function $f$ to be the largest nondecreasing, piecewise constant function taking the values $\{4^jf_1(r_0)\} \to \infty$ so that the preimage of each value has length at least 1 and $f \leq f_1$. Then $f$ is nonnegative, increasing, has
\[ \lim_{r \to \infty} f(r) = + \infty, \]
and satisfies
\[ f(r+1) \leq 4f(r)\ \ \forall\ \ r \geq r_0'. \]
Moreover, $R_g(x) \geq f_1(d(x_0, x)) \geq f(d(x_0, x))$. Starting with $r_0$, we inductively define
\[ r_i := r_{i-1}+2Lf(r_{i-1})^{-1/2}. \]
Suppose for contradiction that $r_i \leq N < +\infty$ for all $i$. Then
\[ r_i - r_{i-1} = 2Lf(r_{i-1})^{-1/2} \geq 2Lf(N)^{-1/2} > 0, \]
which yields a contradiction. Hence, $r_i \to +\infty$. We also have,
\[ r_i - r_{i-1} = 2Lf(r_{i-1})^{-1/2} \leq 1 \]
(by assuming $r_0$ sufficiently large).
Let $E_i$ be the unique unbounded component of $M \setminus \overline{B}_{r_i}(x_0)$.
Let $g_i := f(r_{i-1})g$. Then
\[ R_{g_i} \geq 1 \ \ \text{on}\ \ M \setminus B_{r_{i-1}f(r_{i-1})^{1/2}}^{g_i}(x). \]
We first note that $r_{i-1}f(r_{i-1})^{1/2} \geq r_0$. Moreover, we have
\[ d_{g_i}(\partial E_{i-1}, \partial E_i) = f(r_{i-1})^{1/2}(r_i - r_{i-1}) = 2Lf(r_{i-1})^{-1/2}f(r_{i-1})^{1/2} = 2L  \]
and
\[ d_{g_i}(\partial E_i, \partial E_{i+1}) = f(r_{i-1})^{1/2}(r_{i+1} - r_i) = 2Lf(r_i)^{-1/2}f(r_{i-1})^{1/2} \in [L, 2L] \]
because
\begin{align*}
2L \geq 2Lf(r_i)^{-1/2}f(r_{i-1})^{1/2} & \geq 2Lf(r_i)^{-1/2}f(r_i-1)^{1/2}\\
& \geq Lf(r_i)^{-1/2}f(r_i)^{1/2} = L.
\end{align*}
Then by Lemma \ref{lem:main} (with $a_1$ and $a_2$ the distances above), we have
\[ \vol_{g_i}(\overline{E}_i \setminus E_{i+1}) \leq C_1, \]
so
\[ \vol_g(\overline{E}_i \setminus E_{i+1}) \leq C_1f(r_{i-1})^{-3/2}. \]
Then
\[ \vol_g(B_{r_k}(x_0)) \leq V + C_1\sum_{i=1}^{k-1} f(r_{i-1})^{-3/2}. \]
Since $r_i - r_{i-1} = 2Lf(r_{i-1})^{-1/2}$, we have
\begin{align*}
\sum_{i=1}^{k-1} f(r_{i-1})^{-3/2}
& = \frac{1}{2L}\sum_{i=1}^{k-1} (r_i - r_{i-1})f(r_{i-1})^{-1}.
\end{align*}
Then
\begin{align*}
\lim_{k \to \infty} \frac{1}{r_k}\vol_g(B_{r_k}(x_0)) \leq \frac{C_1}{2L} \lim_{k\to \infty} \frac{1}{r_k}\sum_{i=1}^k (r_i-r_{i-1})f(r_{i-1})^{-1}.
\end{align*}
Let $\eps > 0$. Let $k \in \N$ sufficiently large so that $f(r_k)^{-1} < \eps/2$. Let $l \in \N$ sufficiently large so that
\[ f(r_0)^{-1}\frac{r_k - r_0}{r_l} < \eps/2. \]
Then
\[ \frac{1}{r_l}\sum_{i=1}^l (r_i-r_{i-1})f(r_{i-1})^{-1} \leq f(r_0)^{-1}\frac{r_k-r_0}{r_l} + f(r_k)^{-1} < \eps. \]
Hence, we have
\[ \lim_{k \to \infty} \frac{1}{r_k}\vol_g(B_{r_k}(x_0)) = 0, \]
which contradicts Yau's linear volume growth (cf.\ \cite[Theorem 4.1]{SY:book}) since $M$ is noncompact.
\end{proof}

\section{Sharpness of main result}
We provide an example to demonstrate the sharpness of the growth upper bounds in Theorem \ref{thm:main}.

Consider on $[1, \infty) \times S^2$ the metric
\[ g = dt^2 + \rho(t)^2g_{S^2}^{\text{round}}. \]
We glue a compact cap so that positive Ricci curvature is preserved, so it suffices to study the scalar curvature decay and volume growth on this end.

Let $X_i \in T_pS^2$ be an orthonormal basis with respect to $g_{S^2}$. By \cite[\S 4.2.3]{Petersen:Riemannian}, we have
\[ R_{(t, p)}(\rho(t)^{-1}X_i, \partial_t, \partial_t, \rho(t)^{-1}X_i) = -\rho''(t)/\rho(t) \]
and
\[ R_{(t, p)}(\rho(t)^{-1}X_1, \rho(t)^{-1}X_2, \rho(t)^{-1}X_2, \rho(t)^{-1}X_1) = 1/\rho(t)^2. \]
Hence,
\begin{align*}
& \Ric_g(\partial_t, \partial_t) = -2\rho''(t)/\rho(t)\\
& \Ric_g(\rho(t)^{-1}X_i, \rho(t)^{-1}X_i) = -\rho''(t)/\rho(t) + 1/\rho(t)^2\\
& R_g = -5\rho''(t)/\rho(t) + 1/\rho(t)^2.
\end{align*}

Let $0 < \alpha < 2$. Take $\rho(t) = t^{\alpha/2}$. Then
\begin{align*}
& \rho(t) = t^{\alpha/2}\\
& \rho'(t) = \frac{\alpha}{2}t^{\alpha/2-1}\\
& \rho''(t) = \frac{\alpha}{2}\left(\frac{\alpha}{2}-1\right)t^{\alpha/2-2}.
\end{align*}
Hence, $\Ric_g > 0$ and
\[ R_g = -\frac{5\alpha}{2}\left(\frac{\alpha}{2}-1\right)t^{-2} + t^{-\alpha} \geq t^{-\alpha}. \]
Take $x_0 = (1, p_0)$. Then
\[ d_g((1, p_0), (t, p)) \leq t - 1 + d_{S^2}(p, p_0) \geq t - 1. \]
Then for $x = (t, p)$ with $d(x_0, x) \geq 1$, we have
\[ R_g(x) \geq t^{-\alpha} \geq (d_g(x_0, x) + 1)^{-\alpha} \geq 2^{-\alpha}d_g(x_0, x)^{-\alpha}. \]

Finally we compute the volume growth. For $r > 2\pi$, we have
\begin{align*}
\vol_g(B_r(x_0))
& \geq \vol_g([1, r/2] \times S^2)\\
& = C\int_1^{r/2} t^{\alpha} dt\\
& = \frac{C}{2^{1+\alpha}(1+\alpha)}r^{1+\alpha} - \frac{C}{1+\alpha}.
\end{align*}
Then
\[ \lim_{r \to \infty} r^{-1-\alpha}\vol_g(B_r(x_0)) \geq \frac{C}{2^{1+\alpha}(1+\alpha)} > 0. \]

See also the discussion in \cite{MW:2} after their statement of Theorem 1.4 for a related example demonstrating that it is possible to have nearly non-negative Ricci curvature $\Ric_g \geq -C d(x_0,x)^{-2}\log d(x_0,x)$ with $\inf_{M\setminus B_r(x_0)} R_g \to\infty $ as $r\to\infty$. 

\appendix

\section{Gromov-Hausdorff approximations}

We recall the definition of a Gromov-Hausdorff approximation.
\begin{definition}\label{defi:eps-GH-approx}
A map $f : (X, d_X) \to (Y, d_Y)$ is an \emph{$\eps$-Gromov-Hausdorff approximation} if
\[ |d_X(x_1, x_2) - d_Y(f(x_1), f(x_2))| < \eps \]
for all $x_1, x_2 \in X$ and
\[ Y \subset B_{\eps}(f(X)). \]
\end{definition}

We recall a notion of Gromov-Hausdorff distance between metric spaces using Gromov-Hausdorff approximations.
\begin{definition}\label{defi:GH}
We say $d_{\text{GH}}((X, d_X), (Y, d_Y)) < \eps$ if there are $\eps$-Gromov-Hausdorff approximations
\[ f: X \to Y \ \ \text{and} \ \ g : Y \to X. \]
\end{definition}

\begin{proposition}\label{prop:centers}
Let
\[ f : B_R(x) \subset (X, d_X) \to B_R(y, 0) \subset (Y \times \R, d_Y \times d_{\text{Euc}}) \]
be an $\eps$-Gromov-Hausdorff approximation. Then
\[ d(f(x), (y, 0)) < \sqrt{\eps}\sqrt{9\eps + 8R}. \]
\end{proposition}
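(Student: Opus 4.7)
The plan is to write $f(x) = (y', s)$ with $y' \in Y$ and $s \in \R$, set $a := d_Y(y', y)$, and bound $a^2 + s^2$, since $d(f(x), (y, 0)) = \sqrt{a^2 + s^2}$. The key geometric idea is to probe $f$ at the symmetric pair of axis points $(y, \pm \tau) \in B_R(y, 0)$ for $\tau$ slightly less than $R$: when the two resulting estimates are squared and added, the cross terms $\pm 2 s \tau$ will cancel, simultaneously trapping both $s$ and $a$ in a single scalar inequality.

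Concretely, for each $\tau \in [0, R)$ both $(y, \tau)$ and $(y, -\tau)$ lie in $B_R(y, 0)$, so the $\eps$-surjectivity condition of Definition~\ref{defi:eps-GH-approx} produces points $z_\pm \in B_R(x)$ with $d(f(z_\pm), (y, \pm \tau)) < \eps$. Since $z_\pm \in B_R(x)$, the distance-distortion condition gives $d(f(x), f(z_\pm)) < d_X(x, z_\pm) + \eps < R + \eps$, so the triangle inequality in $Y \times \R$ yields
\[
\sqrt{a^2 + (s \mp \tau)^2} \;=\; d\bigl((y', s), (y, \pm \tau)\bigr) \;<\; R + 2 \eps
\]
for both sign choices. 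Squaring and summing the two inequalities cancels the $\pm 2 s \tau$ cross terms, producing $a^2 + s^2 + \tau^2 < (R + 2 \eps)^2$, i.e.\ $a^2 + s^2 < (R + 2\eps)^2 - \tau^2$. Sending $\tau \to R^{-}$ gives $a^2 + s^2 \leq 4 R \eps + 4 \eps^2$, which is strictly less than $\eps (9 \eps + 8 R)$, so the stated bound $d(f(x), (y, 0)) < \sqrt{\eps}\sqrt{9 \eps + 8 R}$ follows.

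The argument is short and uses only the two defining properties of an $\eps$-Gromov--Hausdorff approximation together with the product structure of $Y \times \R$. There is no serious obstacle: the only non-obvious choice is to test $f$ at a \emph{symmetric} pair of points on the $\R$-axis rather than a single point, which is what kills the cross term and lets a single estimate control both coordinates of $f(x)$ at once.
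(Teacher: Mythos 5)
Your proof is correct, and it takes a slightly different route from the paper's. Both proofs start the same way: using $\eps$-surjectivity plus distance distortion to conclude that every point of $B_R(y,0)$ is within $R+2\eps$ of $f(x)$. The paper then tests this against the single antipodal probe $(y, -R+\eps)$ (where the sign is chosen opposite to the $\R$-coordinate of $f(x)$), and extracts the bounds on the $Y$-coordinate and the $\R$-coordinate of $f(x)$ \emph{separately}: first $t_0 \le 3\eps$ by dropping the $d(y_0,y)^2 \ge 0$ term, then $d(y_0,y)^2 \le 8R\eps$ by dropping the $t_0$ term, and finally adds $t_0^2 + d(y_0,y)^2 \le 9\eps^2 + 8R\eps$. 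You instead probe at the symmetric pair $(y, \pm\tau)$ and exploit the cancellation of the cross term $\pm 2s\tau$ upon squaring and summing, which traps $a^2+s^2$ directly in a single inequality. This is cleaner and even yields the sharper bound $a^2 + s^2 \le 4R\eps + 4\eps^2$ (strictly below the stated $\eps(9\eps+8R)$), and it also sidesteps a mild implicit hypothesis in the paper's version (the paper's comparison $(t_0+R-\eps)^2 \ge (R-2\eps)^2$ silently uses $R \ge 2\eps$, which is harmless in context but is not needed in your argument). So: same setup, different extraction step, with your version being a little more robust and a little sharper.
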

\begin{proof}
Let $y' \in B_R(y, 0)$. By definition, there is an $x' \in B_R(x)$ with $d(f(x'), y') < \eps$. Moreover, $d(x, x') < R$, so $d(f(x), f(x')) < R + \eps$. Hence,
\[ d(f(x), y') \leq d(f(x), f(x')) + d(f(x'), y') < R + 2\eps, \]
which implies
\[ B_R(y, 0) \subset B_{R + 2\eps}(f(x)). \]

Write $f(x) = (y_0, t_0)$. Without loss of generality (by relabeling plus and minus), we have $t_0 \geq 0$. Since $(y, -R + \eps) \in B_R(y, 0)$, we have
\[ (R + 2\eps)^2 > d(f(x), (y, - R + \eps)) = d(y_0, y)^2 + (t_0 + R - \eps)^2. \]
Since $d(y_0, y)^2 \geq 0$, we have $t_0 \leq 3\eps$. Since $t_0 \geq 0 \geq -\eps$, we have
\[ d(y_0, y)^2 \leq (R + 2\eps)^2 - (R - 2\eps)^2 = 8R\eps. \]
The conclusion follows.
\end{proof}

\begin{proposition}\label{prop:prod_met}
Suppose $(X, d)$ is a path connected metric space. Take any $R > 0$ and $x \in X$, and let $(\tilde{X}, \tilde{d})$ the ball of radius $R > 0$ centered at $(x, 0)$ in the product metric space $(X \times \R, d \times d_{\text{Euc}})$. Let $r < \min\{\frac{1}{2}\diam(X), R\}$. Then the subset
\[ \tilde{X} \setminus B_r(x, 0) \subset \tilde{X} \]
is path connected.
\end{proposition}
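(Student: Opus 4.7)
The plan is to connect every point of $\tilde{X} \setminus B_r(x, 0) = \{(y, t) : r^2 \leq d(y, x)^2 + t^2 < R^2\}$ to a fixed basepoint on the axis $\{(x, s) : r \leq |s| < R\}$, and then to show the upper and lower halves of the axis are themselves connected within $\tilde{X} \setminus B_r(x, 0)$. I will fix $R_1 \in (r, R)$ and aim to link each point to $p^\pm := (x, \pm R_1)$.

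For the reduction to the axis, given $(y, t)$, by the symmetry $t \leftrightarrow -t$ I will assume $t \geq 0$ and construct a path to $p^+$ in two stages. First, I will set $R^* := \max(R_1, d(y, x) + \eps) < R$ for small $\eps > 0$ and slide vertically from $(y, t)$ to $(y, t^*)$ where $t^* = \sqrt{R^{*2} - d(y, x)^2}$; a short check using the monotonicity of $d(y, x)^2 + s^2$ in $|s|$ shows the intervening vertical segment stays in $\tilde{X} \setminus B_r(x, 0)$. Next, I will pick a path $\gamma: [0,1] \to X$ from $y$ to $x$ and trace the spherical arc $u \mapsto (\gamma(u), \sqrt{R^{*2} - d(\gamma(u), x)^2})$, which lies on the $\tilde{d}$-sphere of radius $R^*$ about $(x, 0)$, hence in $\tilde{X} \setminus B_r(x, 0)$. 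A final vertical slide along the axis from $(x, R^*)$ to $p^+$ finishes this stage.

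To bridge $p^+$ to $p^-$, I will use the hypothesis $r < \tfrac{1}{2}\diam(X)$: there is some $y' \in X$ with $d(y', x) > r$, and the intermediate value theorem applied to $d(\cdot, x)$ along a path from $x$ to $y'$ produces $y^* \in X$ with $D := d(y^*, x) \in (r, R)$. Choosing $R_2 \in (\max(r, D, R_1), R)$ and running the spherical-arc construction on the $R_2$-sphere, I obtain paths $(x, \pm R_2) \leadsto (y^*, \pm\sqrt{R_2^2 - D^2})$. The vertical segment $\{y^*\} \times [-\sqrt{R_2^2 - D^2}, \sqrt{R_2^2 - D^2}]$ through $(y^*, 0)$ stays outside $B_r(x, 0)$ because $D > r$, and vertical slides along the axis connect $(x, \pm R_2)$ to $p^\pm$.

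The main obstacle is the spherical-arc step, which requires the $X$-path $\gamma$ to remain inside $B_{R^*}(x)$ so that $\sqrt{R^{*2} - d(\gamma(u), x)^2}$ is real. Mere path-connectedness of $X$ is not obviously enough, but in the setting of the application $X = Y$ is the length space produced by the Cheeger--Colding almost splitting theorem, and one may then select $\gamma$ of length at most $d(y, x) + \eps$; the triangle inequality forces $\gamma \subset B_{d(y, x) + \eps}(x) \subset B_{R^*}(x)$, supplying the required control.
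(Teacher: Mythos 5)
Your proof is correct and reaches the conclusion by a genuinely different route from the paper. The paper's argument is a retraction: given two points in $\tilde X\setminus B_r(x,0)$ with nonnegative $t$-coordinates, it starts from an \emph{arbitrary} path $(x(s),t(s))$ in $\tilde X$ between them, replaces $t(s)$ by $\max\{t(s),0\}$, and then pushes any portion that dips into $B_r(x,0)$ outward by substituting $\sqrt{r^2-d(x(s),x)^2}$ for $t(s)$ there; both substitutions only decrease the distance to $(x,0)$ where it matters, so the modified path stays in $\tilde X$, lands on $\partial B_r(x,0)$ wherever it was pushed, and is continuous because $t(s)\ge 0$. This shows each half $\{t\ge 0\}$ and $\{t\le 0\}$ of $\tilde X\setminus B_r(x,0)$ is path connected, and the two halves are then bridged through a point $(x',0)$ with $r<d(x',x)<R$, whose existence uses $r<\tfrac12\diam X$ together with path-connectedness of $X$. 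Your construction instead builds explicit paths out of vertical slides and arcs on spheres of carefully chosen radii $R^*$, $R_2$, reducing everything to the axis $\{x\}\times\R$.

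Two remarks on the comparison. First, your spherical-arc step does need paths in $X$ from $y$ to $x$ confined to $B_{R^*}(x)$, which is strictly more than path-connectedness of $X$; you correctly flag this and fill it using the length-space property of $Y$. But the paper's argument has the \emph{same} hidden hypothesis: taking ``any path in $\tilde X$'' already presupposes that $\tilde X$, a metric ball in $X\times\R$, is path connected, which is likewise not automatic from path-connectedness of $X$ (balls in path-connected metric spaces can fail to be path connected) and is also supplied by $Y\times\R$ being a length space. So both proofs are correct in the intended setting and silently or explicitly rely on the same extra structure; yours is more transparent about it. Second, the paper's retraction is shorter and avoids the case analysis ($t$ versus $t^*$, $R^*$ versus $R_1$, the $p^+\to p^-$ bridge through $y^*$) that your constructive approach requires, so while both work, the paper's mechanism is the more economical one.
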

\begin{proof}
We first show that the region $(\tilde{X} \setminus B_r(x, 0)) \cap (\tilde{X} \times \R_+)$ is path connected. Let $(x_i, t_i) \in \tilde{X} \setminus B_r(x, 0)$ for $i = 1, 2$ with $t_i \geq 0$. Let $\gamma(s) = (x(s), t(s))$ be any path in $\tilde{X}$ joining $(x_1, t_1)$ to $(x_2, t_2)$. By replacing $t(s)$ by $\max\{t(s), 0\}$, we obtain a continuous path in $(\tilde{X} \setminus B_r(x, 0)) \cap (\tilde{X} \times \R_+)$ joining the points, so we can assume $t(s) \geq 0$. Now we take
\[
\tilde{t}(s) := 
\begin{cases}
\sqrt{r^2 - d(x(s), x)^2} & d(x(s), x)^2 + t(s)^2 \leq r^2\\
t(s) & \text{otherwise}.
\end{cases}
\]
Since $t(s) \geq 0$, $\tilde{t}(s)$ is continuous. Moreover, if $\tilde{t}(s) \neq t(s)$, then
\[ d(x(s), x)^2 + \tilde{t}(s)^2 = d(x(s), x)^2 + r^2 - d(x(s), x)^2 = r^2, \]
so $\tilde{\gamma}(s) := (x(s), \tilde{t}(s))$ is a path in $(\tilde{X} \setminus B_r(x, 0)) \cap (\tilde{X} \times \R_+)$ joining the two points.

By the same argument for the $\R_-$ side, we have $(\tilde{X} \setminus B_r(x, 0)) \cap (\tilde{X} \times \R_+)$ is path connected.

Since $r < \frac{1}{2}\diam(X)$ and $X$ is path connected, there is an $x' \in X$ with $r < d(x', x) < R$. Then $(x', s)$ is a path in $\tilde{X} \setminus B_r(x, 0)$ for $s$ sufficiently small, which joins the two path connected regions. Hence, the conclusion follows.
\end{proof}

\bibliographystyle{amsalpha}
\bibliography{psc_vol_growth}

\providecommand{\bysame}{\leavevmode\hbox to3em{\hrulefill}\thinspace}
\providecommand{\MR}{\relax\ifhmode\unskip\space\fi MR }
\providecommand{\MRhref}[2]{%
  \href{http://www.ams.org/mathscinet-getitem?mr=#1}{#2}
}
\providecommand{\href}[2]{#2}
\begin{thebibliography}{MW22}

\bibitem[And90]{Anderson}
Michael~T. Anderson, \emph{Convergence and rigidity of manifolds under {R}icci
  curvature bounds}, Invent. Math. \textbf{102} (1990), no.~2, 429--445.
  \MR{1074481}

\bibitem[CC96]{CC:splitting}
Jeff Cheeger and Tobias~H. Colding, \emph{Lower bounds on {R}icci curvature and
  the almost rigidity of warped products}, Ann. of Math. (2) \textbf{144}
  (1996), no.~1, 189--237. \MR{1405949}

\bibitem[CG72]{CheegerGromoll}
Jeff Cheeger and Detlef Gromoll, \emph{On the structure of complete manifolds
  of nonnegative curvature}, Ann. of Math. (2) \textbf{96} (1972), 413--443.
  \MR{309010}

\bibitem[CL20]{CL:aspherical}
Otis Chodosh and Chao Li, \emph{Generalized soap bubbles and the topology of
  manifolds with positive scalar curvature},
  \url{https://arxiv.org/abs/2008.11888} (2020).

\bibitem[CL21]{CLstable}
Otis Chodosh and Chao Li, \emph{Stable minimal hypersurfaces in
  $\mathbf{R}^4$}, \url{https://arxiv.org/abs/2108.11462} (2021).

\bibitem[CL22]{CL:aniso}
Otis Chodosh and Chao Li, \emph{Stable anisotropic minimal hypersurfaces in
  $\mathbf{R}^4$}, \url{https://arxiv.org/abs/2206.06394} (2022).

\bibitem[CLS22]{CLS:stable}
Otis Chodosh, Chao Li, and Douglas Stryker, \emph{Complete stable minimal
  hypersurfaces in positively curved 4-manifolds},
  \url{https://arxiv.org/abs/2202.07708} (2022).

\bibitem[Gro86]{Gromov:large}
M.~Gromov, \emph{Large {R}iemannian manifolds}, Curvature and topology of
  {R}iemannian manifolds ({K}atata, 1985), Lecture Notes in Math., vol. 1201,
  Springer, Berlin, 1986, pp.~108--121. \MR{859578}

\bibitem[Gro18]{Gromov:metric-inequalities}
Misha Gromov, \emph{Metric inequalities with scalar curvature}, Geom. Funct.
  Anal. \textbf{28} (2018), no.~3, 645--726. \MR{3816521}

\bibitem[Liu13]{Liu:milnor}
Gang Liu, \emph{3-manifolds with nonnegative {R}icci curvature}, Invent. Math.
  \textbf{193} (2013), no.~2, 367--375. \MR{3090181}

\bibitem[MW21]{MW:1}
Ovidiu Munteanu and Jiaping Wang, \emph{Comparison theorems for
  three-dimensional manifolds with scalar curvature bound}, to appear in Int.\
  Math.\ Res.\ Not., \url{https://arxiv.org/abs/2105.12103} (2021).

\bibitem[MW22]{MW:2}
\bysame, \emph{Comparison theorems for {3D} manifolds with scalar curvature
  bound, {II}}, \url{https://arxiv.org/abs/2201.05595} (2022).

\bibitem[Nab20]{Naber:conj}
Aaron Naber, \emph{Conjectures and open questions on the structure and
  regularity of spaces with lower {R}icci curvature bounds}, SIGMA Symmetry
  Integrability Geom. Methods Appl. \textbf{16} (2020), Paper No. 104, 8.
  \MR{4164873}

\bibitem[Pet08]{Petrunin:upperbound}
A.~M. Petrunin, \emph{An upper bound for the curvature integral}, Algebra i
  Analiz \textbf{20} (2008), no.~2, 134--148. \MR{2423998}

\bibitem[Pet16]{Petersen:Riemannian}
Peter Petersen, \emph{Riemannian geometry}, third ed., Graduate Texts in
  Mathematics, vol. 171, Springer, Cham, 2016. \MR{3469435}

\bibitem[SY82]{SY:ric}
Richard Schoen and Shing-Tung Yau, \emph{Complete three-dimensional manifolds
  with positive {R}icci curvature and scalar curvature}, Seminar on
  {D}ifferential {G}eometry, Ann. of Math. Stud., vol. 102, Princeton Univ.
  Press, Princeton, N.J., 1982, pp.~209--228. \MR{645740}

\bibitem[SY94]{SY:book}
R.~Schoen and S.-T. Yau, \emph{Lectures on differential geometry}, Conference
  Proceedings and Lecture Notes in Geometry and Topology, I, International
  Press, Cambridge, MA, 1994, Lecture notes prepared by Wei Yue Ding, Kung
  Ching Chang [Gong Qing Zhang], Jia Qing Zhong and Yi Chao Xu, Translated from
  the Chinese by Ding and S. Y. Cheng, With a preface translated from the
  Chinese by Kaising Tso. \MR{1333601}

\bibitem[Xu20]{Xu:Yauconj}
Guoyi Xu, \emph{Integral of scalar curvature on non-parabolic manifolds}, J.
  Geom. Anal. \textbf{30} (2020), no.~1, 901--909. \MR{4058542}

\bibitem[Yau92]{Yau:problems}
Shing-Tung Yau, \emph{Open problems in geometry}, Chern---a great geometer of
  the twentieth century, Int. Press, Hong Kong, 1992, pp.~275--319.
  \MR{1201369}

\bibitem[Zhu22]{Zhu:psc.int}
Bo~Zhu, \emph{Geometry of positive scalar curvature on complete manifold},
  \url{https://arxiv.org/abs/2201.12668} (2022).

\end{thebibliography}

\end{document}